\newtheorem{thm}{Theorem}[section]
\newtheorem{remark}[thm]{Remark}
\def\RN{\mathbb{R}^N}
\def\RR{\mathbb{R}}
\def\al{\alpha}
\def\la{\lambda}
\def\cQ{\mathcal{Q}}
\def\pa{\partial}
\def\fhi{\varphi}
\def\de{\delta}
\newcommand{\De}{\Delta}
\newcommand{\dve}{\mathrm{div}}
\newcommand{\cG}{\mathcal G}
\newcommand{\ga}{\gamma}
\newcommand{\si}{\sigma}
\newcommand{\Si}{\Sigma}
\newcommand{\Om}{\Omega}
\newcommand{\RE}{\mathbb R}
\newcommand{\ovr}{\overline}
\title[]{The Matzoh Ball Soup problem: \\ 
a complete characterization}
\thanks{This research has
been supported by the Gruppo Nazionale per l'Analisi Matematica, la Probabilit\`a e le loro Applicazioni (GNAMPA) of the italian Istituto Nazionale di Alta Matematica (INdAM).} 
\author[Magnanini]{Rolando Magnanini}
\address{Dipartimento di Matematica e Informatica ``U. Dini'', Universit\`a di Firenze, viale Morgagni 67/A, 50134 Firenze, Italy}
\email{magnanin@math.unifi.it}
\author[Marini]{Michele Marini}
\address{Scuola Normale Superiore, Piazza dei Cavalieri 7, 56126 Pisa, Italy}
\email{michele.marini@sns.it}
\keywords{Heat equation, stationary isothermic surfaces}
\subjclass[2010]{35K05, 35K15, 53A10, 58J70.}
\begin{document}

\maketitle

\begin{abstract}
We characterize all the solutions of the heat equation that have their (spatial) equipotential surfaces
which do not vary with the time. Such solutions are either isoparametric or split in space-time. The result gives a final answer to a problem raised by M. S. Klamkin, extended by G. Alessandrini, and
that was named the {\it Matzoh Ball Soup Problem} by L. Zalcman. Similar results can also be drawn for a
class of quasi-linear parabolic partial differential equations with coefficients which are homogeneous functions of the gradient variable. This class contains the (isotropic or anisotropic) evolution $p$-Laplace and normalized $p$-Laplace equations.
\end{abstract}

\pagestyle{plain}
\thispagestyle{plain}
\markboth{}{}

\section{Introduction}
\label{introduction}
In this paper we settle a question raised by M.S. Klamkin in \cite{Kl} and extended in \cite{Al2}.

\begin{quote}{} {\bf Klamkin's conjecture (1964).} 
Consider the heat conduction problem for a solid $\Om$, 
$$
u_t=\De u \ \mbox{ in } \ \Om\times(0,\infty).
$$
Initially,
$u=0.$ On the boundary $u=1.$ The solution to the problem is well-known for a sphere
and, as to be expected, it is radially symmetric. Consequently, the equipotential
surfaces do not vary with the time (the temperature on them, of course, varies). It
is conjectured for the boundary value problem above, that the sphere is the only
bounded solid having the property of invariant equipotential surfaces.
If we allow unbounded solids, then another solution is the infinite
right circular cylinder which corresponds to the spherical solution in 
two-dimensions. 
\end{quote}

L. Zalcman \cite{Za} included this problem in a list of questions about the ball and named it the {\it Matzoh Ball Soup} problem. For the case of a bounded solid, the conjecture was given a positive answer by G. Alessandrini \cite{Al1}: {\it the ball is the only
bounded solid having the property of invariant equipotential surfaces}. 
\par
In \cite{MS1}, the second author of this paper and S. Sakaguchi 
showed that to obtain the spherical symmetry of the solid in Klamkin's setting, it is enough to require that the solution has {\it only one} invariant equipotential  surface (provided this surface is a $C^1$-regular boundary of  domain). In a subsequent series of papers, the same authors extended their result in several directions: spherical symmetry also holds for certain evolution nonlinear equations (\cite{MS2, MS4, MS6, MS7}); a hyperplane can be characterized as an invariant equipotential surface in the case of an unbounded solid that satisfies suitable sufficient conditions (\cite{MS3, MS5}); for a certain Cauchy problem, a helicoid is a possible invariant equipotential surface  (\cite{MPS});
spheres, infinite cylinders and planes are characterized as (single) invariant equipotential surfaces in $\RE^3$ (\cite{MS8}); similar symmetry results can also be proven in the sphere and the hyperbolic space (\cite{MS4}).
\par
In \cite{Al2}, G. Alessandrini re-considered Klamkin's problem for a bounded domain in the case in which $u$ initially equals any function $u_0\in L^2(\Om)$ and is zero on $\pa\Om$ for all times. He discovered that either $u_0$ is a Dirichlet eigenfunction or $\Om$ is a ball. A comparable result was obtained by S. Sakaguchi \cite{Sa} when a homogeneous Neumann condition is in force on $\pa\Om$.
\par
The aim of this paper is to show that Klamkin's property of having invariant equipotential surfaces 
characterizes a solution of the heat equation without assuming {\it any whatsoever} initial or boundary condition. This is the content of our main result.
\begin{thm}
\label{th:stationary}
Let $\Om\subseteq\RN$ be a domain and let $u$ be a solution of the heat equation:
\begin{equation}
\label{heat}
u_t=\De u \ \mbox{ in } \ \Om\times(0,\infty).
\end{equation}
\par
Assume that there exists a $\tau>0$ such that, for every $t>\tau,$
$u(\cdot,t)$ is constant on the level surfaces of $u(\cdot,\tau)$ and $Du(\cdot,\tau)\ne 0$ in $\Om$.
\par
Then one of the following occurrences holds:
\begin{enumerate}[(i)]
%\item[(i)]
%$u$ is constant;
\item
the function $\fhi=u(\cdot,\tau)$ (and hence $u$) is isoparametric, that is there exist two real-valued 
functions $f$ and $g$ such that $\fhi$ is a solution of the following system of equations:
$$
|D\fhi|^2=f(\fhi) \ \mbox{ and } \ \De \fhi=g(\fhi) \ \mbox{ in } \ \Om;
$$
\item
there exist two real numbers  $\la, \mu$  such that
$$
u(x,t)=e^{-\la t} \phi_\la(x)+\mu, \ \ (x,t)\in\Om\times[\tau,\infty),
$$ 
where
$$
\De\phi_\la+\la\,\phi_\la=0 \ \mbox{ in } \ \Om;
$$
\item
there exists a real number $\ga$ such that 
$$
u(x,t)=\ga\,t+w(x), \ \ (x,t)\in\Om\times[\tau,\infty),
$$
where
$$
\De w=\ga \ \mbox{ in } \ \Om.
$$
\end{enumerate}
\end{thm}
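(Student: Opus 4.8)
The plan is to reduce the PDE hypothesis to an ODE analysis for a profile function, using the smoothing and rigidity properties of the heat equation only at the very end. Write $\fhi=u(\cdot,\tau)$ and note that the hypothesis ``$u(\cdot,t)$ is constant on the level surfaces of $\fhi$'' says exactly that $u(x,t)$ depends on $x$ only through $\fhi(x)$. Since $\Om$ is connected and $D\fhi\ne0$ throughout $\Om$, the range $J$ of $\fhi$ is a nondegenerate interval and there is a function $F\colon J\times[\tau,\infty)\to\RR$ with
$$
u(x,t)=F(\fhi(x),t),\qquad F(s,\tau)=s .
$$
The regularity of $F$ comes for free: solutions of \eqref{heat} are real-analytic in $(x,t)$ for $t>0$, so $\fhi$ is analytic with nonvanishing gradient and, inverting $\fhi$ locally along a curve transversal to the level sets, $F$ is real-analytic on $J\times(\tau,\infty)$. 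This analyticity is what I expect to be the crucial ingredient.

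Next I would substitute $u=F(\fhi,t)$ into \eqref{heat}. Using $Du=F_s\,D\fhi$ and $\De u=F_{ss}\,|D\fhi|^2+F_s\,\De\fhi$, the equation becomes
$$
F_t(\fhi,t)=F_{ss}(\fhi,t)\,|D\fhi|^2+F_s(\fhi,t)\,\De\fhi\qquad\text{in }\Om\times(\tau,\infty).
$$
Evaluating at $t=\tau$, where $F(\cdot,\tau)=\mathrm{id}$ forces $F_s(\cdot,\tau)\equiv1$ and $F_{ss}(\cdot,\tau)\equiv0$, yields $\De\fhi=F_t(\fhi,\tau)$, i.e. $\De\fhi=g(\fhi)$ for $g(s):=F_t(s,\tau)$. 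Substituting this back, the identity rearranges to
$$
F_{ss}(\fhi,t)\,|D\fhi|^2=h(\fhi,t),\qquad h(s,t):=F_t(s,t)-F_s(s,t)\,g(s),
$$
whose right-hand side depends on $x$ only through $\fhi(x)$.

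Now comes the decisive dichotomy, and the step I expect to be the main obstacle. If $F_{ss}\equiv0$ we are in the separated case treated below. Otherwise $F_{ss}\not\equiv0$, and here analyticity does the work: since $F_{ss}$ is analytic and not identically zero, there is a time $t_0$ for which $s\mapsto F_{ss}(s,t_0)$ vanishes only at isolated points, so for all but discretely many levels $s$ one may solve $|D\fhi|^2=h(s,t_0)/F_{ss}(s,t_0)$, showing that $|D\fhi|^2$ is constant on $\{\fhi=s\}$. Because $|D\fhi|^2$ is continuous and the level sets vary continuously with $s$ (again $D\fhi\ne0$), this constancy propagates across the exceptional levels, giving $|D\fhi|^2=f(\fhi)$ on all of $\Om$; together with $\De\fhi=g(\fhi)$ this is precisely the isoparametric conclusion (i). The real obstacle is excluding the ``mixed'' behaviour in which $F_{ss}$ vanishes on some level sets but not others: without the unique-continuation strength of analyticity one could not rule out a patchwork, so I would lean on the real-analyticity of $u$ together with a careful continuity argument to fill in the measure-zero family of bad levels.

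It remains to treat $F_{ss}\equiv0$, which is routine. Then $F(s,t)=a(t)\,s+b(t)$ with $a(\tau)=1$, $b(\tau)=0$, so $u=a(t)\fhi+b(t)$ and \eqref{heat} forces
$$
a'(t)\,\fhi(x)+b'(t)=a(t)\,\De\fhi=a(t)\,g(\fhi(x)) .
$$
As this holds for all $s=\fhi(x)$ in the nondegenerate interval $J$, linear independence of $\{1,s,g(s)\}$ on $J$ forces $g$ to be affine, say $g(s)=k_1s+k_2$ (otherwise we would need $a\equiv a'\equiv b'\equiv0$, contradicting $a(\tau)=1$). Then $a'=k_1a$ and $b'=k_2a$ integrate explicitly: if $k_1\ne0$ one sets $\la=-k_1$ and takes $\phi_\la$ a suitable multiple of $\fhi+k_2/k_1$, obtaining $\De\phi_\la+\la\,\phi_\la=0$ and $u=e^{-\la t}\phi_\la+\mu$, which is (ii); if $k_1=0$ then $g\equiv\ga:=k_2$, $a\equiv1$, $b$ is affine in $t$, and absorbing constants gives $u=\ga\,t+w$ with $\De w=\ga$, which is (iii). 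This exhausts all possibilities.
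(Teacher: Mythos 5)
Your proposal is correct, and while it shares the paper's overall skeleton (write $u(x,t)=\eta(\fhi(x),t)$ with $\eta(s,\tau)=s$, substitute into the equation, and split according to whether $\eta$ is affine in $s$), it differs from the paper's proof in two substantive ways. First, the paper obtains a second relation by differentiating the identity $\eta_{ss}|D\fhi|^2+\eta_s\De\fhi=\eta_t$ with respect to $t$ and then analyzing the $2\times 2$ determinant $\eta_s\eta_{sst}-\eta_{st}\eta_{ss}$; you instead evaluate the identity at the special time $t=\tau$, where $\eta_s\equiv1$ and $\eta_{ss}\equiv0$, which yields $\De\fhi=g(\fhi)$ \emph{unconditionally} and reduces the dichotomy to whether $\eta_{ss}\equiv0$ (these two dichotomies are in fact equivalent, since $\eta_s\eta_{sst}-\eta_{st}\eta_{ss}\equiv0$ together with $\eta_s(\cdot,\tau)\equiv1$ forces $\eta_s$ to depend on $t$ alone). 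This is a genuine simplification. Second, in the non-affine case the paper gets (i) only on a subdomain and extends it to all of $\Om$ by invoking the Levi-Civita--Segre classification plus analyticity, whereas you extend directly: the identity $\eta_{ss}(\fhi,t_0)|D\fhi|^2=h(\fhi,t_0)$ holds globally, $\eta_{ss}(\cdot,t_0)$ has isolated zeros, and the constancy of $|D\fhi|^2$ on the exceptional level sets follows by letting nearby regular levels degenerate onto them (using $D\fhi\ne0$). This is more self-contained than the paper's appeal to the classification theorem. Your treatment of the affine case via linear independence of $\{1,s,g(s)\}$ is equivalent to the paper's vanishing-determinant argument for the system in $(\fhi,\De\fhi)$ and leads to the same ODEs $a'=k_1a$, $b'=k_2a$.

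One inaccuracy should be corrected: caloric functions are \emph{not} in general jointly real-analytic in $(x,t)$ (they are analytic in $x$ but, in general, only of Gevrey class $2$ in $t$), so the claim that $\eta$ is real-analytic on $J\times(\tau,\infty)$ is too strong. Fortunately your argument never uses joint analyticity: you only need that each time-slice $s\mapsto\eta(s,t)$ is analytic (which follows from spatial analyticity of $u(\cdot,t)$ and $\fhi$ together with $D\fhi\ne0$, via your transversal-curve parametrization) and that $\eta$ is smooth in $t$ (which follows from smoothness of $u$ in $t$); in particular the time $t_0$ in your dichotomy is simply any time at which $\eta_{ss}(\cdot,t_0)\not\equiv0$. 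With that rephrasing the proof is complete.
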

As we shall see in Remark \ref{rmk:null gradient}, the assumption on the gradient of the solution $u$ is only technical. Without that assumption,  another (trivial) occurrence would be that $u$ is constant.
\par
The proof of Theorem \ref{th:stationary} relies on and completes those contained in \cite{Al2} and \cite{Sa}: there, option (iii) and the fact that initial and boundary conditions are unnecessary were overlooked. 
\par
{\it Isoparametric functions} are well-known in the literature; accordingly, their level surfaces are called
{\it isoparametric surfaces} and can also be characterized as those surfaces whose principal curvatures are all constant. The classical results of T. Levi-Civita \cite{Le} and B. Segre \cite{Se} classify  isoparametric functions in $\RN$ by their level surfaces:
they can be either concentric spheres, co-axial spherical cylinders (that is cartesian products of an $M$-dimensional sphere by an $(N-M-1)$-dimensional euclidean space, $0\le M\le N-2$), or parallel hyperplanes (affine spaces of co-dimension $1$). 
By using this fact, one can conclude fairly easily that, in Klamkin's setting, that is when (i) of Theorem \ref{th:stationary} holds and $u$ is constant on $\pa\Om$, then the possible shapes of a domain $\Om$ can be one of the following: a ball, its exterior or a spherical annulus; a spherical cylinder, its exterior or a cylindrical annulus; a half-space or an infinite strip (see \cite{Al2} for the simply connected case). Analogous results can  be drawn in the case we impose on $u$ a homogeneous Neumann condition (see \cite{Sa}),
\begin{equation}
\label{neumann}
u_\nu=0 \ \mbox{ on } \ \pa\Om\times(0,\infty).
\end{equation}
\par
Thus, a caloric function that has invariant equipotential surfaces enjoys some {\it splitting property in space-time}, since it is always separable (either with respect to addition or to multiplication).  We point out that 
this behaviour is not restricted to the case of the heat equation, since it also occurs for other {\it linear} evolution equations, such as the {\it wave equation}, the {\it Schr\"odinger equation} or 
any partial differential equation connected to the heat equation by some integral transform.
\par
More interestingly, a similar behaviour holds for the following class of quasi-linear evolution equations:
\begin{equation}
\label{non-linear}
u_t=\cQ\,u \ \mbox{ in } \ \Om\times(0,\infty),
\end{equation}
where the operator
\begin{equation}
\label{defQ}
\cQ\,u=\sum_{i,,j=1}^N a_{ij}(Du)\,u_{x_i x_j}
\end{equation}
is elliptic and the coefficients $a_{ij}(\xi)$ are sufficiently smooth $\al$-homogeneous functions of $\xi\in\RN$, $\al>-1$, that is such that
\begin{equation}
\label{homogeneous}
a_{ij}(\si\,\xi)=\si^\al a_{ij}(\xi) \ \mbox{ for every } \ \xi\in\RN,\ \si>0 \ \mbox{ and } \ i, j=1,\dots, N;
\end{equation}
this will be shown in Theorem \ref{th:non-linear}.
\par
Important instances of \eqref{non-linear} are the {\it evolution $p$-Laplace} equation,
\begin{equation}
\label{p-Laplace}
u_t=\dve\{ |Du|^{p-2}\,Du\} \ \mbox{ in } \ \Om\times(0,\infty);
\end{equation}
the  
{\it normalized evolution $p$-Laplace} equation,
\begin{equation}
\label{normalized-p-Laplace}
u_t=|Du|^{2-p}\dve\{ |Du|^{p-2}\,Du\} \ \mbox{ in } \ \Om\times(0,\infty);
\end{equation}
the (anisotropic) {\it evolution h-Laplace equation},
\begin{equation}
\label{H-Laplace}
u_t=\De_h u\ \mbox{ in } \ \Om\times(0,\infty);
\end{equation}
here, 
$$
\De_h u=\dve\{ h(Du)\,Dh(Du)\} 
$$
is the so-called {\it anisotropic} $h$-laplacian, where $h:\RN\to\RE$ is a non-negative convex function of class
$C^2(\RN\setminus\{0\})$, which is even and positively homogeneous of degree $1$.
\par
In Section \ref{sec:symmetry}, besides a brief discussion on the possible symmetries of the domain $\Om$ when initial and boundary conditions are added, we will also prove a classification theorem, related to equation \eqref{H-Laplace}, for isoparametric surfaces in the spirit of Levi-Civita and Segre's result (Theorem \ref{th:H-isoparametric}).

\setcounter{equation}{0}

\section{The splitting property}

The proof of Theorem \ref{th:stationary} and Theorem \ref{th:non-linear} below are similar; however,
we prefer to give them separately to draw the reader's attention on the different behaviours that
depend on the parameter $\al$ in \eqref{homogeneous}.

\begin{proof}[Proof of Theorem \ref{th:stationary}]
%As we shall see in the following remark, the assumption on the gradient of the solution $u$ is only technical, and can be removed, here we consider it in order to provide the reader a shorter and more intelligible proof.\\
As originally observed in \cite{Al1}, the assumption on the level surfaces of $u$ implies that, if we set $\fhi(x)=u(x,\tau)$, then
there exist a number $T>\tau$ and a function $\eta:\RE\times[\tau,T)\to\RE$, with
\begin{equation}
\label{init-eta}
\eta(s,\tau)=s, \ \ s\in\RE,
\end{equation}
such that
\begin{equation}
\label{alessandrini}
u(x,t)=\eta(\fhi(x),t)
\end{equation} 
for every $(x,t)\in\ovr{\Om}\times[\tau,T);$  
thus, \eqref{heat} gives that
$$
\eta_{ss}(\fhi,t)\,|D\fhi|^2+\eta_s(\fhi,t)\,\De\fhi=\eta_t(\fhi,t)\ \mbox{ in } \Om\times[\tau,T).
$$
By differentiating in $t$ this identity, we obtain that $\fhi$ must satisfy in $\Om$ the
following system of equations:
\begin{equation}
\label{sakasys}
\begin{array}{ll}
&\eta_s(\fhi,t)\,\De\fhi+\eta_{ss}(\fhi,t)\,|D\fhi|^2=\eta_t(\fhi,t),\\
&\eta_{st}(\fhi,t)\,\De\fhi+\eta_{sst}(\fhi,t)\,|D\fhi|^2=\eta_{tt}(\fhi,t),
\end{array}
\end{equation}
for $t\in[\tau,T)$.
\par
Notice that the necessary smoothness of the function 
$\eta$ can be proved by a standard finite difference argument:
in fact, one can prove that, since $D\fhi\ne0$, then $\eta\in C^\infty(I\times[\tau,T))$, where $I=(\inf_\Om\fhi,\sup_\Om\fhi)$
and $\eta_s>0$ on $I\times[\tau,T)$ (see \cite[Lemma 1]{Al1}, \cite[Lemma 2.1]{Al2} or \cite[Lemma 2.1]{Sa} for details).
\par
As observed in \cite{Sa}, for the system \eqref{sakasys}, it is enough to consider the alternative cases in which the determinant
$$
D(s,t)=\eta_s\eta_{sst}-\eta_{st}\eta_{ss}
$$
is zero or not zero. In fact, if $D(s,t)\ne0$ at some $(s,t)\in I\times[\tau,T)$, then $D\ne0$ in an open neighborhood, say $U\times V\subset I\times [\tau,T)$, of $(s,t)$; \eqref{sakasys} then implies that 
$$
|D\fhi|^2=f(\fhi) \ \mbox{ and } \ \De \fhi=g(\fhi)  
$$
at least in a subdomain $\Om'$ of $\Om$, and the expressions  of $f$ and $g$ are given by the formulas
$$
f=\frac{\eta_s\eta_{tt}-\eta_{st}\eta_{t}}{\eta_s\eta_{sst}-\eta_{st}\eta_{ss}}, \quad
g=\frac{\eta_{t}\eta_{sst}-\eta_{tt}\eta_{ss}}{\eta_s\eta_{sst}-\eta_{st}\eta_{ss}};
$$
clearly, $f$ and $g$ are analytic functions. Thus $u(\cdot,t)$ (and hence $\fhi$) is isoparametric in an open subdomain of $\Om$; by using the classification result for isoparametric functions by Levi-Civita and Segre and the analiticity of $\fhi$, we can then conclude that the function $\fhi$ is isoparametric in the whole $\Om$.
\par
Otherwise, we have $D=0$ in $I\times [\tau, T)$. Thus, since
$$
\frac{\pa^2}{\pa s \pa t}\,\log (\eta_s)=(\eta_s)^{-2}\,(\eta_s\eta_{sst}-\eta_{st}\eta_{ss})=0
\ \mbox{ in } \ I\times[\tau,T),
$$
we have that $\log\eta_s(s,t)$ splits up into the sum of a function of $s$ plus a function of $t$;  \eqref{init-eta} then implies that $\eta_s(s,t)$ only depends on $t$, and hence it is easy to conclude that
$$
\eta(s,t)=a(t)\,s+b(t), \ \ (s,t)\in I\times[\tau,T),
$$
for some smooth functions $a$ and $b$ such that
$$
a(\tau)=1 \ \mbox{ and } \ b(\tau)=0.
$$
\par
Now, we now know that
$$
u(x,t)=a(t)\,\fhi(x)+b(t)
$$
is a solution of \eqref{heat}, thus \eqref{sakasys} can be written as
a linear system of equations: 
\begin{equation}
\label{sakasys2}
\begin{array}{lll}
&&a'(t)\,\fhi(x) -a(t)\,\De\fhi=-b'(t), \\
&&a''(t)\,\fhi(x) -a'(t)\,\De\fhi=-b''(t).
\end{array}
\end{equation}
The determinant of this system must be zero, otherwise $\fhi$
would be constant (in fact, it would be that $\fhi(x)$ is a function of $t$); thus,
$$
a(t) a''(t)-a'(t)^2=0 \ \mbox{ for } \ t\in[\tau,\infty), \ a(\tau)=1.
$$
All solutions of this problem can be written as $a(t)=e^{-\la (t-\tau)}$ for $\la\in\RE$ and,
by going back to \eqref{sakasys2}, we obtain that
$$
\De\fhi(x)+\la\,\fhi(x)=b'(t)\,e^{\la (t-\tau)}=\ga,
$$
for some constant $\ga$. Since $b(\tau)=0$, we have:
$$
b(t)=\ga\,\frac{1-e^{-\la (t-\tau)}}{\la} \ \mbox{ if } \ \la\not=0 \quad \mbox{ and } \quad
b(t)=\ga\, (t-\tau) \ \mbox{ if } \ \la=0.
$$
Therefore, we have obtained:
\begin{eqnarray*}
&&u(x,t)=e^{-\la (t-\tau)}\, [\fhi(x)-\ga/\la]+\ga/\la\ \mbox{ if } \ \la\not=0, \\
&&u(x,t)=\fhi(x)+\ga (t-\tau) \ \mbox{ if } \ \la=0.
\end{eqnarray*}
\par
In conclusion, by setting $\phi_\la=e^{\la\tau}[\fhi-\ga/\la]$ and $\mu=\ga/\la$ for $\la\not=0$, we get case (ii), while setting $w=\fhi-\ga \tau$  for $\la=0$ yields case (iii).
\end{proof}

\begin{remark}
\label{rmk:null gradient}
{\rm
For the sake of simplicity, in Theorem \ref{th:stationary} we assumed that $Du\ne 0$ in the whole $\Om$, in order to be able to deduce the necessary regularity for $\eta$.  Here, we will show how 
that assumption can be removed.
\par
The only possible obstruction to the regularity of $\eta$ is the presence of level sets of $\fhi$ whose points are all critical. Indeed $\eta$ is always smooth in the $t$ variable and is smooth in the $s$ 
variable, for every semi-regular value $s$ of $\fhi$ ({\em i.e.} a value that is the image of at least one regular point).
For every open subset of $\Om$ of regular points for $\fhi$ corresponding to a semiregular value of $\fhi$, the above theorem still holds true even if we remove the assumption on the gradient of the solution $u$. 
\par
We now show that, in any case, the classification holds true globally in $\Om$.
For simplicity we assume that there exists only one critical level set. 
\par
Let then $s$ be a value in the range of $\fhi$ such that $D \fhi(x)=0$, for every $x\in\fhi^{-1}(s)$ and set $\Om^+=\{x\in\Om\, :\, \fhi(x)> s\}$, and $\Om^-=\{x\in\Om\, :\, \fhi(x)< s\}$. Clearly the above classification holds true separately in $\Om^+$ and in $\Om^-$. Being cases (i), (ii) and (iii) closed relations involving continuous functions, if the same case occours both in $\Om^+$ and $\Om^-$, then it holds in the whole domain $\Om$. Thus, we are left with all the cases in which in $\Om^+$ and $\Om^-$ the solution assumes two different representations of those given in Theorem \ref{th:stationary}.
\par
We proceed by direct ispection.
If case (i) occurs in $\Om^+$ or in $\Om^-$, as it has been already remarked, then $\fhi$ extends to an isoparametric function in an open domain containing $\Om$.
\par
We then have only to study the case in which instances (ii) and (iii) are in force in $\Om^-$ and $\Om^+$, respectively; by contradiction, we shall see that it is not possible to have a critical level set. In fact, up to sum a constant, we can assume without loss of generality that $s\ne 0$. As shown in \cite[Lemma 2.2]{Al2}, the presence of a critical level set implies that $\De \fhi(x)=0$, for every $x\in\ovr{\Om^+}\cap\ovr{\Om^-}$. Then, according to the previous computations, we get that $0=\De \fhi(x)=\lambda \fhi(x)$, and, being $\lambda \ne 0$, we have that $\fhi(x)=0$ and thence $s=0$, that is a contradiction.
}
\end{remark}

We now consider the case of the quasi-linear operator \eqref{non-linear}. Notice that, when $\al=0$,
this also concerns \eqref{normalized-p-Laplace} besides the heat equation.
In what follows, it is useful to define the generalized gradient operator:
\begin{equation}
\label{grad}
\cG u=\sum_{i,j=1}^N a_{ij}(Du)\, u_{x_i} u_{x_j}.
 \end{equation}
 
\begin{thm}
\label{th:non-linear}
Let $\Om\subseteq\RN$ be a domain and let $u\in C^1((0,\infty); C^2(\Om))$ be a solution of equation \eqref{non-linear},
$$
u_t=\cQ\,u \ \mbox{ in } \ \Om\times(0,\infty),
$$
where the operator $\cQ$, given in \eqref{defQ}, is elliptic with coefficients $a_{ij}(\xi)$ that satisfy \eqref{homogeneous}.
\par
Assume that there exists a $\tau>0$ such that, for every $t>\tau,$
$u(\cdot,t)$ is constant on the level surfaces of $u(\cdot,\tau)$ and $Du(\cdot,\tau)\ne 0$ in $\Om.$
\par
Then the there exists a countable set of values $a_1<a_2<\ldots a_i<\ldots$, such that 
$$
\Om=\bigcup_{i\in\mathbb N}\fhi^{-1}([a_i,a_{i+1}])
$$ 
and, for every subdomain $\Om'\subset \fhi^{-1}([a_i,a_{i+1}])$ one of the following cases occurs:
\begin{enumerate}[(i)]
\item
there exist two real-valued 
functions $f$ and $g$ such that $\fhi=u(\cdot,\tau)$ is a solution of the following system of equations:
$$
\cG \fhi=f(\fhi) \ \mbox{ and } \ \cQ \fhi=g(\fhi) \ \mbox{ in } \ \Om';
$$
\item
there exist two real numbers  $\la, \mu$  such that
$$
u(x,t)=[1+\la\, (t-\tau)]^{-1/\al} \phi_\la(x)+\mu, \ \ (x,t)\in\Om'\times[\tau,\infty),
$$ 
with
$$
\cQ \phi_\la+\frac{\la}{\al}\,\phi_\la=0 \ \mbox{ in } \ \Om',
$$
if $\al\not=1$;
$$
u(x,t)=e^{-\la (t-\tau)} \phi_\la(x)+\mu, \ \ (x,t)\in\Om'\times[\tau,\infty),
$$ 
with
$$
\cQ \phi_\la +\la\,\phi_\la=0 \ \mbox{ in } \ \Om',
$$
if $\al=1$;
\item
there exists a real number $\ga$ such that 
$$
u(x,t)=\ga\,(t-\tau)+w(x), \ \ (x,t)\in\Om'\times[\tau,\infty),
$$
where
$$
\cQ w=\ga \ \mbox{ in } \ \Om'.
$$
\end{enumerate}
\end{thm}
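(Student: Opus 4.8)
The plan is to follow the proof of Theorem \ref{th:stationary} step for step, replacing $\De$ and $|D\fhi|^2$ by $\cQ$ and $\cG$ and tracking the homogeneity exponent $\al$. Exactly as in the linear case (and by the same finite–difference regularity argument, which uses $D\fhi\ne0$), the hypothesis on the level surfaces produces a smooth $\eta$ with $\eta(s,\tau)=s$, $\eta_s>0$, and $u(x,t)=\eta(\fhi(x),t)$, where $\fhi=u(\cdot,\tau)$. First I would insert this representation into \eqref{non-linear}. Since $Du=\eta_s\,D\fhi$ and $\eta_s>0$, the $\al$-homogeneity \eqref{homogeneous} gives $a_{ij}(Du)=\eta_s^\al\,a_{ij}(D\fhi)$, and a direct computation using \eqref{defQ} and \eqref{grad} yields the key identity
$$
\eta_t=\eta_s^\al\bigl[\eta_{ss}\,\cG\fhi+\eta_s\,\cQ\fhi\bigr]\quad\text{in }\Om\times[\tau,T),
$$
the exact analogue of the first line of \eqref{sakasys}. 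Differentiating this in $t$ gives a second relation, and together they form, at each fixed point and for the unknowns $\cG\fhi$ and $\cQ\fhi$, a $2\times2$ linear system.

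The next step is to compute the determinant of that system. After factoring out the $\eta_s$-powers it equals $-\eta_s^{2\al}\,D(s,t)$ with $D(s,t)=\eta_s\eta_{sst}-\eta_{st}\eta_{ss}$, precisely the quantity of \cite{Sa}; since $\eta_s>0$ it vanishes exactly where $D=0$. Where $D(s_0,t_0)\ne0$ the system is uniquely solvable, and because its right-hand sides and coefficients depend only on $(s,t)$, the solution is $t$-independent and constant on each level set; this forces $\cG\fhi=f(\fhi)$ and $\cQ\fhi=g(\fhi)$ with $f,g$ obtained from $\eta$ by Cramer's rule (the same formulas as in Theorem \ref{th:stationary}, with $\cG,\cQ$ in place of $|D\fhi|^2,\De$), which is case (i). Where $D\equiv0$ I would use $\partial_s\partial_t\log\eta_s=\eta_s^{-2}D=0$ to split $\log\eta_s$ into a sum of a function of $s$ and a function of $t$; the normalisation $\eta(s,\tau)=s$ then forces $\eta_s$ to depend on $t$ alone, whence $\eta(s,t)=a(t)\,s+b(t)$ with $a(\tau)=1$, $b(\tau)=0$.

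Substituting this affine $\eta$ back is where the homogeneity enters decisively: from $Du=a(t)\,D\fhi$ one gets $\cQ u=a(t)^{\al+1}\,\cQ\fhi$, so the equation reads $a'(t)\,\fhi+b'(t)=a(t)^{\al+1}\,\cQ\fhi$. Since $\fhi$ is non-constant, evaluating at two points with distinct values of $\fhi$ forces $a'=k_1\,a^{\al+1}$ and $\cQ\fhi=k_1\fhi+k_2$ for constants $k_1,k_2$ (otherwise $\fhi$ would be a function of $t$ alone). Integrating the separable ODE $a'=k_1a^{\al+1}$ with $a(\tau)=1$ gives $a(t)=[1+\la(t-\tau)]^{-1/\al}$ when $\al\ne0$ and $a(t)=e^{-\la(t-\tau)}$ in the borderline exponent $\al=0$—the value that, as remarked before the statement, covers the heat equation and \eqref{normalized-p-Laplace}, for which case (ii) reduces to the exponential splitting of Theorem \ref{th:stationary}. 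Absorbing $k_2$ and $b$ into an additive constant $\mu$ and a shifted function $\phi_\la=\fhi-c$ then yields the two displayed forms of case (ii) with $\cQ\phi_\la+\tfrac{\la}{\al}\phi_\la=0$ (resp.\ $\cQ\phi_\la+\la\phi_\la=0$), while $\la=0$ (hence $a\equiv1$ and $b$ linear) gives case (iii) with $\cQ w=\ga$.

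Finally I would address the decomposition of $\Om$, which is the genuinely new point compared with Theorem \ref{th:stationary}. There, the analyticity of $\fhi$ together with the Levi-Civita--Segre classification let one propagate the isoparametric identity from a subdomain to all of $\Om$, producing a clean global trichotomy; here solutions of \eqref{non-linear} need not be analytic and no such classification is available, so the regimes cannot be glued and the conclusion is only local. The set $A=\{s\in I:\,D(s,t)\ne0\text{ for some }t\}$ is open in $I$ by continuity of $D$, hence a countable union of intervals, while its complement $\{s:\,D(s,\cdot)\equiv0\}$ is closed; taking the $a_i$ to be the endpoints of these intervals produces the countable family with $\Om=\bigcup_i\fhi^{-1}([a_i,a_{i+1}])$, so that on the interior of each slab a single regime—either $D\ne0$ (case (i)) or $D\equiv0$ (cases (ii)/(iii))—is in force, whence on every subdomain $\Om'$ one of the three alternatives holds. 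The main obstacle I anticipate is precisely controlling this partition: guaranteeing that the transition values $\{a_i\}$ form a countable, properly ordered set rather than a pathological (e.g.\ Cantor-type) boundary of $A$, which in the linear case is automatic from the analyticity of $D(\cdot,t)$ but here must be handled by a covering/exhaustion argument in the smooth category.
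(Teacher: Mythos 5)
Your proposal is correct and follows essentially the same route as the paper: the same representation $u=\eta(\fhi(x),t)$, the same $2\times2$ system in the unknowns $(\cQ\fhi,\cG\fhi)$ whose determinant (you factor it as $\pm\,\eta_s^{2\al}\,(\eta_s\eta_{sst}-\eta_{st}\eta_{ss})$, while the paper writes it via $\xi=(\eta_s)^{\al+1}$ as $\xi\,\xi_{st}-\xi_s\,\xi_t$; the two differ only by the positive factor $(\al+1)\,\eta_s^{2\al}$, so they vanish on the same set) separates case (i) from the affine regime $\eta(s,t)=a(t)\,s+b(t)$, and the same decomposition of $\Om$ into slabs $\fhi^{-1}([a_i,a_{i+1}])$ determined by the open set of values $s$ where the determinant is somewhere nonzero. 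One point worth recording: your dichotomy --- exponential splitting for $\al=0$ and power-law $[1+\la(t-\tau)]^{-1/\al}$ for $\al\ne0$ --- is the one that actually follows from the ODE $a'=\mathrm{const}\cdot a^{\al+1}$ and is what the paper's proof derives; the ``$\al=1$ versus $\al\ne1$'' appearing in the statement of case (ii) is a typo for ``$\al=0$ versus $\al\ne0$''.
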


\begin{proof}
The proof runs similarly to that of Theorem \ref{th:stationary}. We still begin by setting
$\fhi(x)=u(x,\tau)$ and $u(x,t)=\eta(\fhi(x),t)$, where $\eta$ satisfies \eqref{init-eta}. 
Since $Du\ne 0$ equation \eqref{non-linear} is uniformly parabolic, by standard parabolic regularity (see, for instance \cite{LU}), we have the necessary regularity to give sense to the following computations.
\par
By arguing as in the proof of Theorem \ref{th:stationary}, we obtain the system of equations:
\begin{equation}
\label{magnasys}
\begin{array}{ll}
&\xi(\fhi,\tau)\,\cQ \fhi+\xi_s(\fhi,\tau)\,\cG \fhi=\eta_t(\fhi,\tau),\\
&\xi_t(\fhi,\tau)\,\cQ \fhi +\xi_{st}(\fhi,\tau)\,\cG \fhi=\eta_{tt}(\fhi,\tau),
\end{array}
\end{equation}
where $\xi=(\eta_s)^{\al+1}$.
\par
At this point, the proof is slightly different from that of Theorem \ref{th:stationary}. If there exists $(s,t)$ such that $D(s,t)=\xi\,\xi_{st}-\xi_s\,\xi_t\not=0$, then $D(u(x,t),t)\not=0$ for $x\in\Om'=\{ x\in\Om: u(x,t)\in (s-\de, s+\de)\}$ for some $\de>0$. By setting
$$
\ovr f=\frac{\xi_s\,\eta_{tt}-\xi_{t}\,\eta_{t}}{\xi\,\xi_{st}-\xi_s\,\xi_t}, \quad
\ovr g=\frac{\xi_{st}\,\eta_{t}-\xi_{s}\,\eta_{tt}}{\xi\,\xi_{st}-\xi_s\,\xi_t},
$$
case (i) holds true for the function $u(\cdot,t)$. The same fact holds true for $\fhi$, since $u(x,t)=\eta(\fhi(x),t)$ and hence $\fhi$ and $u(x,t)$ share the same level sets; thus, there exist functions $f$ and $g$ such that $\cG \fhi=f(\fhi)$ and $\cQ \fhi=g(\fhi)$ in $\Om'$.
\par
Now, we define $J$ as the closure of the complement in the image of $u$ of the closure of the set $\{ s\,:\,D(s,t)\not=0,\,\,\mbox{ for some }t\}$; $J$ is a countable union of disjoint intervals. Let $[a,b]\subseteq J$; in $\Om'=\fhi^{-1}([a,b])$, we get
\begin{eqnarray*}
&&\frac{\pa^2}{\pa s \pa t}\,\log (\xi)=(\xi)^{-2}\,(\xi\,\xi_{st}-\xi_{t}\,\xi_{s})=0
\ \mbox{ in } \ [a,b]\times[\tau,T), \\ 
&&\xi(s,\tau)=1 \ \mbox{ for } \ s\in [a,b].
\end{eqnarray*}
As before, we obtain
$$
\eta(s,t)=a(t)\,s+b(t),
$$
with $a(\tau)=1$ and $b(\tau)=0$. Proceeding as before with $u(x,t)=a(t)\,\fhi(x)+b(t)$ gives the system:
\begin{equation}
\label{magnasys2}
\begin{array}{ll}
&a'(t)\,\fhi(x) -a(t)^{\al+1}\,\cQ \fhi=-b'(t),\\
&a''(t)\,\fhi(x) -(\al+1)\, a(t)^{\al} a'(t)\,\cQ \fhi=-b''(t).
\end{array}
\end{equation}
\par
The determinant of this system must be zero, otherwise $\fhi$
would be constant; thus, $a$ must satisfy the problem
$$
a^{\al+1} a''-(\al+1)\, a^{\al} (a')^2=0 \ \mbox{ in } \ [\tau,T), \ \ a(\tau)=1.
$$
The solutions of this problem are for $t\in[\tau, T)$
\begin{eqnarray*}
&&a(t)=[1+\la\, (t-\tau)]^{-1/\al} \ \mbox{ if } \ \al\not=0,\\
&&a(t)=e^{-\la\, (t-\tau)} \ \mbox{ if } \ \al=0,
\end{eqnarray*}
for some $\la\in\RR$, and going back to the first equation in \eqref{magnasys2} gives
\begin{eqnarray*}
&&\cQ \fhi+\frac{\la}{\al}\,\fhi(x) =b'(t)\,a(t)^{-(\al+1)}=\ga \ \mbox{ if } \ \al\not=0,\\
&&\cQ \fhi+\la\,\fhi(x) =b'(t)\,a(t)^{-1}=\ga \ \mbox{ if } \ \al=0.
\end{eqnarray*}
Thus, we have that
\begin{eqnarray*}
&&b(t)=\frac{\ga \al}{\la}\,\{1-[1+\la\, (t-\tau)]^{-1/\al}\}\ \mbox{ if } \ \al\not=0,\\
&&b(t)=\ga\,\frac{1-e^{-\la\, (t-\tau)}}{\la}\ \mbox{ if } \ \al=0,
\end{eqnarray*}
for $\la\not=0$,
$$
b(t)=\ga\,(t-\tau),
$$
for $\la=0$ and any $\al\ge 0$.
\par
Therefore, (ii) follows when $\la\not=0$, by setting $\phi_\la=\fhi-\ga \al/\la$ and $\mu=\ga \al/\la$ for $\al\not=0$ and $\phi_\la=\fhi-\ga/\la$ and $\mu=\ga/\al$ for $\al=0$; (iii) follows when $\la=0$, by choosing $w=\fhi$.
\end{proof}

\setcounter{equation}{0}

\section{Symmetry in space}
\label{sec:symmetry}

Theorems \ref{th:stationary} and \ref{th:non-linear} thus give us a catalog of functions having time-invariant equi-potential surfaces. In particular, they inform us that the shape of their level surfaces can be quite arbitrary, as those of eigenfunctions are.
\par
This arbitrariness significantly reduces if we add  special initial and boundary conditions. For instance, if we consider the situation in Klamikin's conjecture, that is if $\Om$ is bounded and simply connected and we require that the solution $u$ of
\eqref{heat} is initially $0$ on $\Om$ and equals a non-zero constant on $\pa\Om$ at all times\footnote{It can be seen that that constant can be replaced by any bounded positive function that bounded away from zero on $\pa\Om$.}, then the 
only possibility is that the level surfaces of $u$ be concentric spheres, as proved in \cite{Al1} and \cite{Sa}. If we examine the case of a {\it non-simply connected} bounded domain, then the connected components of its boundary are spheres and must be concentric, because they are parallel. Thus, a spherical annulus is the only possibility. In a similar way we argue when is $\Om$ is unbounded, 
thus obtaining the remaining possible solutions: a (solid) spherical cylinder; a hyperspace; an infinite strip; a cylindrical annulus; the exterior of a ball and of a spherical cylinder.
\par
Actually, the presence of those initial and boundary conditions allows one to relax the stringent 
requirement that {\it all} equi-potential surfaces be time-invariant. In fact, to obtain the spherical symmetry of them it is enough to require
that {\it only one} of them be time-invariant, provided it is the boundary of a $C^1$-smooth domain
compactly contained in $\Om$ (see \cite{MS1}). We also point out that this result can be obtained under 
quite general regularity assumptions on $\pa\Om$, as shown in \cite{MS7}. The case in which $\pa\Om$
is not bounded is not settled: we mention \cite{MS3} and \cite{MS5} for some partial results in this direction and \cite{MS8} for a characterization in $3$-space. 
\par
The possible non-regularity of $\pa\Om$, instead, opens up the way to non-spherical solutions 
if a homogeneous Neumann boundary condition is assumed on $\pa\Om$ for all times. As shown in \cite{Sa},
in fact, truncated spherical cylinders are possible solutions, if $\Om$ is a Lipschitz domain.  At
this date, no results are known if a homogeneous Robin boundary condition is assumed:
$$
u_\nu-\si\,u=0 \ \mbox{ on } \ \pa\Om\times(0,\infty).
$$
\par
We notice that the situations considered in Klamkin's conjecture and in \cite{Sa} rule
out the occurrence of items (ii) and (iii) in Theorem \ref{th:stationary}. Thus one has to deal with (i),
which means that the relevant time-invariant surfaces can only be (portions) of spheres, spherical cylinders or hyperplanes. Of course, the same situation reproduces for any choice of initial and/or boundary conditions which rule out (ii) and (iii). For instance, any time-independent boundary
condition coupled with a non-harmonic initial condition rule out (iii).
\par
Circumstances analogous to those of Theorem \ref{th:stationary} emerge in Theorem\ref{th:non-linear}, where the Laplace operator is replaced by the more general quasi-linear operator $\cQ$ defined in 
\eqref{defQ} with coefficients satisfying \eqref{homogeneous}.  In particular, when case (i) of Theorem \ref{th:non-linear} occurs, in some instances we have to deal with the description of {\it anisotropic} isoparametric functions and surfaces. In the rest of this paragraph, we shall focus on the important case
of the evolution $h$-Laplace equation \eqref{H-Laplace}; here, it is convenient to look at the function $h$ as the {\it support function} of a convex body $K\subset\RN$ belonging to the class $C_+^2$ of bodies having a $C^2$-smooth boundary with (strictly) positive principal curvatures. By setting
\begin{equation}
\label{defH}
H=\frac12\,h^2,
\end{equation}
the system of equations that appears in item (i) of Theorem \ref{th:non-linear} can be conveniently re-written as
\begin{eqnarray}
&&D H(D\fhi)\cdot D\fhi=f(\fhi) \ \mbox{ in } \Om, \label{K-grad} \\ 
&&\De_h \fhi=g(\fhi) \ \mbox{ in } \ \Om. \label{K-lapl}
\end{eqnarray}
We also notice that, since $H$ is $2$-homogeneous, by Euler's identity \eqref{K-grad} can be re-written as
\begin{equation}
\label{K-grad2}
2\,H(D\fhi)=f(\fhi) \ \mbox{ in } \ \Om.
\end{equation}

\par
We shall say that a function $\fhi$ is {\it $K$-isoparametric} if it is a solution of  \eqref{K-grad}-\eqref{K-lapl}; accordingly, its level surfaces will be
called  {\it $K$-isoparametric surfaces}.

%\begin{thm}
%\label{th:neumann}
%caratterizzazione del dominio nel caso Neumann (si veda \cite{Sa}), tenendo conto 
%del teorema di Levi-Civita e Segre (forse ci può pensare Giulio; il caso ``calore'' \` gi\`a in \cite{Sa})
%\end{thm}
%
%\begin{thm}
%\label{th:robin}
%caratterizzazione del dominio nel caso Robin (forse ci si pu\`o pensare insieme, ma se non viene rapidamente ci rinunciamo).
%\end{thm}

\begin{thm}[$K$-isoparametric functions]
\label{th:H-isoparametric}
Let $K\subset\RN$ be a convex body of class $C_+^2$ and let $h$ denote its support function. 
\par
Let $\fhi\in C^2(\Om)$ be a $K$-isoparametric function.
Then its level surfaces are of the form
\begin{equation}
\label{K-isoparametric}
Dh(\mathbb S^{M})\times \mathbb R^{N-1-M},\quad M=0,\ldots, N-1.\\
\end{equation}
\end{thm}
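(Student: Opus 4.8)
The plan is to mirror, in the anisotropic setting, the two--step scheme behind the Levi-Civita--Segre theorem: first show that the level sets of $\fhi$ form an \emph{anisotropically parallel} family, and then show that each of them has constant anisotropic principal curvatures, which forces its shape.

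Write $p=D\fhi$ and recall that, by Euler's identity, equation \eqref{K-grad2} reads $2H(p)=f(\fhi)$. Differentiating it in $x_k$ and using $DH(p)=h(p)\,Dh(p)$ gives the key identity
\[
2\,D^2\fhi\,\,DH(p)=f'(\fhi)\,p \quad\text{in }\Om.
\]
I would then consider the flow $\dot x=DH(D\fhi(x))$ in the anisotropic normal direction. Along it $\tfrac{d}{dt}\fhi=p\cdot DH(p)=2H(p)=f(\fhi)$, so the flow carries level sets to level sets; moreover, by the identity above, $\tfrac{d}{dt}p=D^2\fhi\,DH(p)=\tfrac12 f'(\fhi)\,p$, so $p$ merely rescales and the Euclidean unit normal $\nu=p/|p|$ stays constant along each trajectory. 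Since $DH(p)=h(p)\,Dh(\nu)$ points in the fixed direction $Dh(\nu)$, the trajectories are \emph{straight segments}. Reparametrizing by the level $c=\fhi$ and integrating $dx/dc=DH(p)/f(\fhi)=Dh(\nu)/\sqrt{f(\fhi)}$, every level set is recovered from a fixed one $\Si_0=\{\fhi=c_0\}$ through the anisotropic parallel (Cahn--Hoffman) map
\[
F_c(y)=y+\La(c)\,Dh(\nu(y)),\qquad \La(c)=\int_{c_0}^{c}\frac{d\si}{\sqrt{f(\si)}}.
\]
Thus the first equation alone produces the whole parallel family, with $\nu$ constant along the (straight) anisotropic normal lines.

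Next I would bring in the second equation. On each level set $h(p)=\sqrt{f(\fhi)}$ is constant, so $\De_h\fhi=\mathrm{div}(DH(p))=g(\fhi)$ says precisely that the \emph{anisotropic mean curvature} of $\Si_c$ is spatially constant, for every $c$ in an interval. Along the family $F_c$ the anisotropic shape operator $A_h=D^2h(\nu)\,S$, where $S$ is the Euclidean Weingarten map and $D^2h(\nu)$ the positive-definite symmetric factor that is well defined because $K\in C_+^2$, obeys a Riccati--type evolution in $\La$, exactly as in the flat Euclidean case; its eigenvalues, the anisotropic principal curvatures, are therefore M\"obius functions of $\La$. Requiring their sum to be spatially constant for all $c$ forces, by matching the resulting rational identities leaf by leaf (Cartan's argument), each anisotropic principal curvature to be constant on every $\Si_c$. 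Hence the level sets are hypersurfaces with \emph{constant anisotropic principal curvatures}.

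It then remains to classify such hypersurfaces, and this is the main obstacle: it is the genuinely anisotropic analogue of the Levi-Civita--Segre theorem. Here I would split off the flat part and then treat the curved part. Because $D^2h(\nu)$ is positive definite, $A_h$ and $S$ share the same kernel; the distribution of directions of vanishing anisotropic principal curvature is then parallel and integrable, with affine leaves, yielding the factor $\mathbb R^{N-1-M}$. On the orthogonal, fully non-degenerate part the Gauss map is a local diffeomorphism onto an $M$-dimensional subset of $\SSN$, and the relation $x=\La_0\,Dh(\nu)$ (up to translation) identifies the profile with $Dh(\mathbb S^{M})$ \emph{once} one knows that the normal image is a round subsphere. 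This last point is the crux: I would obtain it either by adapting Segre's inductive reduction in the dimension $M$ directly to the anisotropic curvature equations, or by proving that the Euclidean Gauss image of a constant-anisotropic-curvature hypersurface is totally geodesic in $\SSN$ and then reconstructing $\Si_0$ as the Wulff cylinder $Dh(\mathbb S^{M})\times\mathbb R^{N-1-M}$.
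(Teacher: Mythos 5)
Your reduction coincides, essentially step for step, with the one in the paper: the identity $2\,[D^2\fhi]\,DH(D\fhi)=f'(\fhi)\,D\fhi$ obtained by differentiating \eqref{K-grad2}; the flow $\dot x=DH(D\fhi(x))$, whose trajectories are straight segments, so that the level sets form an anisotropically parallel (Cahn--Hoffman) family; and the observation that \eqref{K-lapl} makes the anisotropic mean curvature constant on each leaf. Your argument for straightness --- $\dot p=\tfrac12 f'(\fhi)\,p$, so $\nu$ and hence $Dh(\nu)$ are constant along each trajectory --- is in fact a little cleaner than the paper's, which first normalizes $f\equiv 1$ via the substitution $\psi=F(\fhi)$ with $(F')^2=f$ and then computes $\ga_y''=0$.

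The divergence is in the endgame, and that is where the gap sits. Once one has a parallel family of leaves, each with constant anisotropic mean curvature, the paper stops and invokes the anisotropic Levi-Civita--Segre classification of Ge--Ma \cite{GM} and He--Li--Ma--Ge \cite{HLMG}; those references supply both the passage from ``constant anisotropic mean curvature on every parallel leaf'' to ``constant anisotropic principal curvatures'' (your Riccati/power-sum step, which is fine) and, crucially, the classification of hypersurfaces with constant anisotropic principal curvatures as $Dh(\mathbb S^{M})\times\RE^{N-1-M}$, which rests on a Cartan-type identity. You instead attempt this last classification directly and flag the unresolved point yourself: splitting off the nullity distribution is standard, but proving that the non-degenerate factor is the Wulff-type sphere $Dh(\mathbb S^{M})$ --- equivalently, that its Gauss image is a round subsphere of $\SSN$ --- is exactly the content of the Cartan-type argument in \cite{GM}, and neither of the two routes you propose is carried out. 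As a self-contained argument the proof is therefore incomplete at this final step; it closes immediately if, like the paper, you quote the classification theorems of \cite{GM} and \cite{HLMG}.
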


\begin{remark}{\rm
Equation \eqref{K-isoparametric} should be interpreted as follows: $Dh(\mathbb S^{M})$ is an $M$-dimensional submanifold of $Dh(\mathbb S^{N-1})=\pa K$ and the vector-valued function $\psi:Dh(\mathbb S^{M})\times \mathbb R^{N-1-M}\to \RN$ defined by $\psi(Dh(\nu),y)=Dh(\nu)+j(y)$, where $j$ is the natural inclusion of $\mathbb R^{N-1-M}$ in $\RN$, defines an embedding and its image coincides, up to homoteties, with a level surface of $\fhi$.
\par
The proof of the Theorem \ref{th:H-isoparametric} relies on the results obtained in \cite{GM} and \cite{HLMG}, which
generalize the classical ones of Levi-Civita and Segre (\cite{Le}, \cite{Se}). In this new setting, the metric defined on the submanifolds (the level sets of the function $\fhi$) is given by an anisotropic (non constant) operator.
In \cite{GM}, the authors prove a classification theorem for hypersurfaces with constant anisotropic principal curvatures; the proof is mainly based on a Cartan-type identity which forces the $K$-isoparametric surface to admit at most two different values for principal curvatures. 
}
\end{remark}
\begin{proof}
Let $\fhi(x)$ be a regular value of the function $\fhi$, $\Si$ the level surface $\{ y\in\Om: \fhi(y)=\fhi(x)\}$ and let $T_x(\Si)$ be the tangent space to $\Si$ at $x$. 
We also introduce the 
 {\it $K$-anisotropic Weingarten operator},
\begin{equation}
\label{defW}
W=D^2h\left(\frac{D\fhi}{|D\fhi|}\right)\frac{D^2\fhi}{|D\fhi|},
\end{equation}
and the {\it $K$-anisotropic mean curvature}, 
\begin{equation}
\label{defM}
M=\frac1{h(D\fhi)}\,\left\{\De_h \fhi-\frac{D\fhi\cdot[D^2H(D\fhi)]\,[D^2\fhi]\,D\fhi}{|D\fhi|^2}\right\}.
\end{equation}
\par
We differentiate \eqref{K-grad} and \eqref{K-grad2} and obtain the identities (by the square brackets, we denote matrices):
\begin{eqnarray}
\label{identities}
&&[D^2H(D\fhi)] [D^2\fhi]\, D\fhi+[D^2\fhi]\, D H(D \fhi) =f'(\fhi)\,D\fhi, \nonumber\\
&&2\,[D^2\fhi]\, DH(D \fhi)=f'(\fhi)\,D\fhi,\\
&&[D^2H(D\fhi)]\,[D^2\fhi]\, D\fhi=[D^2\fhi]\,DH(D \fhi) \nonumber.
\end{eqnarray}
 \par
After straightforward computations, from the definition \eqref{defM}, the identities \eqref{K-lapl}, \eqref{K-grad2} and \eqref{identities} imply that
$$
M=\frac{g(\fhi)-f'(\fhi)/2}{\sqrt{f(\fhi)}},
$$
that means that $M$ is constant on $\Si$.

We are now going to show that $M$ is actually the trace of the $K$-anisotropic Weingarten operator; to do this we prove the following identity
\begin{equation}
\label{shape}
W(x)=\frac{D^2H(D\fhi(x)) D^2\fhi(x)}{h(D\fhi(x))} \ \mbox{ on } \ T_x(\Si);
\end{equation}
in other words, we show that the two matrices coincide as bilinear forms on $T_x(\Si)$.

In fact, \eqref{defH} and the homogeneities of $h$, $DH$ and $D^2H$ imply that
\begin{multline*}
\frac{[D^2h(\nu)] [D^2\fhi]}{|D\fhi|}=
\frac{[D^2H(\nu)] [D^2\fhi]}{h(\nu)\,|\nabla \fhi|}-
\frac{[DH(\nu)\otimes DH(\nu)] [D^2\fhi]}{h(\nu)^3\,|\nabla\fhi|}= \\
\frac{[D^2H(D\fhi)] [D^2\fhi]}{h(D\fhi)}-
\frac{[DH(D\fhi)\otimes DH(D\fhi)][D^2\fhi]}{h(D\fhi)^3}= \\
=\frac{[D^2H(D\fhi)] [D^2\fhi]}{h(D\fhi)}-\frac12\, g'(\fhi)\,
\frac{[DH(D\fhi)\otimes D\fhi]}{h(D\fhi)^3},
\end{multline*}
where, in the last equality, we used the second identity in \eqref{identities}.
The desired formula \eqref{shape} is then obtained by 
noticing that $T_x(\Si)$ lies in the kernel of $[DH(D\fhi(x))\otimes D\fhi(x)]$, being orthogonal to $D\fhi(x)$.
\par
Notice that $M$ only depends on the geometry of the level surface; indeed, 
$
\nu(x)=D\fhi(x)/|D\fhi(x)|
$
is the normal unit vector to $\Si$ at $x$ and the restriction of 
$
-[D^2\fhi(x)]/|D\fhi(x)|
$ 
to $T_x(\Si)$ is the shape operator of $\Si$.

\par
Now, we claim that there exist a relatively compact neighborhood $U_x\subset\Si$ of $x$ and
a number $\de>0$ such that, for any $y\in U_x$, it holds that
\begin{equation}
\label{claim}
\fhi(y+\tau\,DH(D\fhi(y)))=\fhi(x+\tau\,DH(D\fhi(x))),
\end{equation}
for every $0<\tau<\de$.

Without loss of generality we can assume that $f(\fhi)=1$; indeed, since $\Si$ is a regular level surface for $\fhi$, then $f(\fhi)>0$; by taking $\psi=F(\fhi)$ with $F$ such that $(F')^2=f$, it is easy to show that $\psi$ is another isoparametric function, with the same level surfaces of $\fhi$, and such that $H(D\psi)=1$.

To prove our claim, we first have to show that the integral curves of $DH(D\fhi)$ are geodesics.
Let $U_x\subset\Si$ be a relatively compact neighborhood of $x$ (of course, $D\fhi$ does not vanish on $U_x$). For every $y\in U_x$, let $\ga_y(\tau)$ be the solution of the Cauchy problem
$$
\ga_y'(\tau)=DH(D\fhi(\ga_y(\tau)))
, \ \ \gamma_y(0)=y,
$$
and let $\de$ be such that, for every $y\in U_x$, $\ga_y$ remains regular on $[0,\de]$.
Then we have that
\begin{multline*}
\fhi(\ga_y(\tau))-\fhi(y)=\int_0^\de D\fhi(\ga_y(\si))\cdot\ga_y'(\si)\,d\si=\\
\int_0^\de D\fhi(\ga_y(\si))\cdot DH(D\fhi(\ga_y(\si)))\,d\si=2\,\int_0^\de H(\fhi(\ga_y(\si)))\,d\si=
2\de,
\end{multline*}
where we used Euler identity for $H$ and the fact that we are assuming that $f=1$.
 \par
Moreover, we compute that
\begin{multline*}
\ga_y''(\tau)=[D^2H(D\fhi(\ga_y(\tau)))] [D^2\fhi(\ga_y(\tau))]\,\ga_y'(\tau)=\\
[D^2H(D\fhi(\ga_y(\tau)))] [D^2\fhi(\ga_y(\tau))]\,DH(D\fhi(\ga_y(\tau)))=\\
=\frac12\,f'(\fhi(\ga_y(\tau)))\,[D^2H(D\fhi(\ga_y(\tau)))]\,D\fhi(\ga_y(\tau))=0,
\end{multline*}
where we used \eqref{identities} and the fact that we are assuming that $f=1$.
\par
Thus,
$$
\ga_y(\tau)=y+\tau\,DH(D\fhi(y)) \ \mbox{ for } \ 0\le \tau<\de,
$$
and hence 
$$
\fhi(y+\tau\,DH(D\fhi(y)))-\fhi(y)= 2\de \ \mbox{ for } \ 0\le \tau<\de,
$$
which means that \eqref{claim} holds. 

Therefore, we have proved that, for every $\tau\in[0,\de)$, the surfaces 
$$
\Si_\tau=\{y:\fhi(y)=\fhi(\ga_x(\tau)\}
$$ 
are parallel with respect to the anisotropic metric induced by $K$; also, every $\Si_\tau$ has constant $K$-anisotropic mean curvature. 
\par
Thus, \eqref{K-isoparametric} follows the results \cite[Theorem 2.1]{GM} and \cite[Theorem 1.1]{GM}
recalled in the remark.
\end{proof}

\end{document}